\documentclass[12pt,a4paper]{article}
\usepackage{amssymb}
\usepackage{amsmath}
\usepackage{amsthm}
\usepackage[T1,OT1]{fontenc}
\usepackage{graphicx}
\usepackage[hidelinks]{hyperref}
\usepackage{calrsfs}
\usepackage{dsfont}
\usepackage{bbm}

\newtheorem{theorem}{Theorem}[section]

\newtheorem{lemma}[theorem]{Lemma}

\theoremstyle{definition}

\newtheorem*{note}{Note}

\newcommand{\E}{\mathbb{E}}
\newcommand{\Z}{\mathbb{Z}}

\begin{document}

\title{The Heesch number in $\mathbb{E}^3$ is unbounded}
\author{Bojan Ba\v si\'c\\
\em \small Department of Mathematics and Informatics, University of Novi Sad,\\
\em \small Trg Dositeja Obradovi\'ca 4, 21000 Novi Sad, Serbia\\
\small \tt bojan.basic@dmi.uns.ac.rs
}
\date{}

\maketitle

\begin{abstract}
The Heesch number of a figure is the largest number of times that the figure can be surrounded by layers of its congruent copies. Heesch's problem asks whether finite Heesch numbers are bounded. We show that in the three-dimensional Euclidean space, and thus also in higher-dimensional Euclidean spaces, they are not: for every positive integer $h$, we construct a shape whose Heesch number is finite and at least $h$. The construction is explicit, and it starts from a four-point set of integers that does not tile the line, but packs long intervals.

\emph{Mathematics Subject Classification (2020):} 52C22, 05B45, 52C17, 05B50, 51M20

\emph{Keywords:} Heesch number; Heesch's problem; tiling; tessellation; corona; polycube
\end{abstract}

\section{Introduction}

Let $T$ be a figure in the plane that does not tile the plane by congruent copies. It may nevertheless be possible to surround $T$ by its copies, so that the surrounding copies completely ``tuck in'' $T$, without any gaps nor overlaps; such a layer of copies is called a \emph{corona} of $T$. It may then be possible to surround the obtained form by a further corona, and so on. The largest number of coronas that can be built in this way is the \emph{Heesch number} of $T$, named after Heinrich Heesch \cite{heesch68}, who gave an example of a figure that can be surrounded once, but not twice (an even earlier example can be found in a book on mathematical recreations by Lietzmann \cite{lietzmann}, published in 1928). If $T$ tiles the plane, its Heesch number is infinite, and it can be shown that the converse is also true; thus the Heesch number measures, in a sense, how close a figure that does not tile the plane comes to tiling it. The (still open) question of whether the set of finite Heesch numbers is bounded above is referred to as \emph{Heesch's problem.}

Progress on Heesch's problem has been slow. When Gr\"unbaum and Shephard wrote their monograph \cite{gs}, no figure with finite Heesch number larger than $1$ was known to them. Fontaine \cite{fontaine} found an infinite family of figures with Heesch number $2$; Ammann \cite{ammann} found a notched hexagon with Heesch number $3$; and Mann \cite{mann04} constructed infinite families of figures with Heesch number $5$. A figure with Heesch number $6$ was found in \cite{basic21}, and $6$ is still the largest known finite Heesch number in the plane. Extensive computer searches have not changed this picture: Mann and Thomas \cite{mann-thomas} examined polyforms with marked edges, and Kaplan \cite{kaplan}, with the help of a SAT solver, computed the Heesch numbers of all non-tiling polyominoes, polyhexes and polyiamonds up to a considerable size; the largest value among them turned out to be $4$.

In some related settings, on the other hand, finite Heesch numbers are known to be unbounded. If, instead of a single figure, a set of two or more figures is allowed, then the corresponding Heesch number is unbounded \cite{basic15,bds} (for a suitable definition of the notion of Heesch number for sets of more figures). In the hyperbolic plane, Tarasov \cite{tarasov} found polygons with arbitrarily large finite Heesch numbers (and see the recent preprint \cite{maiti} for convex examples). In higher dimensions, it was shown in \cite{bs22} (for dimensions that are powers of $2$) and in \cite{bs25} (for all $d$, $d\geqslant2$) that there exist figures in $\E^d$ with Heesch number $d-1$; thus finite Heesch numbers are unbounded if the dimension is allowed to grow, but this says nothing about any fixed dimension.

Part of the interest in Heesch's problem comes from its connection with decidability. Suppose that, for some class of figures in which two copies of a figure can be placed next to each other only in finitely many essentially different ways, finite Heesch numbers were bounded. Then it would be decidable whether a given figure from the class tiles the plane: it would suffice to check whether the figure can be surrounded one more time than the bound allows. It is not known whether the tiling problem for a single figure in the plane is decidable. For finite sets of figures, it is undecidable, by the classical results of Berger \cite{berger} and Robinson \cite{robinson} on Wang's domino problem \cite{wang}. For translations of a single tile, Greenfeld and Tao \cite{gt} proved undecidability when the dimension is a part of the input (see \cite{gt} for the precise statement), and a very recent preprint of Demaine and Langerman \cite{dl} announces that it is undecidable whether a single polycube tiles $\E^3$ by translations. If the latter is the case, then an argument along the same lines as above shows that finite Heesch numbers of polycubes in $\E^3$, when only translations are allowed, are unbounded. Such an argument is, however, nonconstructive, and it says nothing about what the polycubes in question look like, or how large they are.

In the present article, we show, by an explicit and elementary construction, that finite Heesch numbers in $\E^3$ are unbounded, all isometries being allowed. Namely, for every positive integer $h$, we construct a shape in $\E^3$ (homeomorphic to a ball) whose Heesch number is finite and at least $h$. The polyhedron is obtained from a polycube consisting of $48h+64$ unit cubes by adding small bumps and nicks to its faces. Note that the result extends to $\E^d$ for every $d$ with $d\geqslant 3$, simply by extending the shape along the additional coordinate axes and adding bumps and nicks to ensure unique locking.

This would not be the first time that a question on tilings is answered in space before its planar version is settled. Hilbert's eighteenth problem \cite{hilbert} asked, among other things, whether there exists a polyhedron that tiles space but is not a fundamental domain of any group of isometries; such a polyhedron was found by Reinhardt \cite{reinhardt} in 1928, while the first planar example was given in 1935 by Heesch himself \cite{heesch35}. Similarly, Schmitt constructed a nonconvex (weakly) aperiodic polyhedron in an unpublished work in 1988 (see \cite{danzer}, where the construction was subsequently refined to obtain a convex, face-to-face example: the Schmitt–Conway–Danzer biprism; though for the latter, reflections must be forbidden); three and a half decades passed before Smith, Myers, Kaplan and Goodman-Strauss finally discovered aperiodic figures in the plane \cite{smkg1,smkg2}.

The idea of our construction is simple. On the line, the set $\{0,2n-1,2n+1,4n\}$ does not tile $\Z$ by translations, but it can pack (without gaps) an interval of length about $4n^2$, which is much larger than its diameter $4n$. We place four copies of a suitable bent path of cubes at these four points along a diagonal direction of the cubic lattice, so that they form a connected polycube (see Figure \ref{fig}, where the case $n=16$ is shown). The path is chosen so that, on the one hand, the polycube does not tile $\Z^3$, and on the other hand, the long one-dimensional packing lifts to a packing of a large cube by translates of the polycube. Finally, bumps and nicks prevent the copies from being rotated or reflected, and the large packed cube provides many coronas.

Let us now briefly describe the content of the article. In Section \ref{s2} we give the precise definitions and recall some basic facts. Section \ref{s3} is devoted to the one-dimensional set mentioned above, and in Section \ref{s4} we construct the polycube and show that it packs large cubes. In Section \ref{s5} we add the bumps and nicks and prove the main result.

\section{Preliminaries}\label{s2}

By a \emph{shape} in $\E^d$ we mean a compact set that is the closure of its interior. A \emph{copy} of a shape $T$ is any set congruent to $T$, where all isometries of $\E^d$ (including reflections) are allowed. A \emph{packing} by copies of $T$ is a collection of copies of $T$ with pairwise disjoint interiors, and a \emph{tessellation} of $\E^d$ is a packing such that the union of all of its members is entire $\E^d$; if such a tessellation exists, we say that $T$ \emph{tessellates} $\E^d$. (For $d=2$, the term \emph{tiling} is rather more common, but in higher dimensions we shall stick to \emph{tessellation.})

Let $n$ be a nonnegative integer. We say that a shape $T$ can be \emph{surrounded $n$ times} if and only if there exist finite collections of shapes $\mathcal C_1,\mathcal C_2,\dots,\mathcal C_n$ such that:
\begin{itemize}
\item for each $i$, $1\leqslant i\leqslant n$, each shape from $\mathcal C_i$ is a copy of $T$;
\item every two different shapes from $\{T\}\cup\bigcup_{i=1}^n\mathcal C_i$ have disjoint interiors;
\item for each $i$, $1\leqslant i\leqslant n$, each shape from $\mathcal C_i$ has a common boundary point with some shape from $\mathcal C_{i-1}$ (where, by convention, we let $\mathcal C_0=\{T\}$);
\item for each $i$, $1\leqslant i\leqslant n$, $\bigcup\left(\bigcup_{j=0}^i{\mathcal C_j}\right)$ contains $\bigcup\left(\bigcup_{j=0}^{i-1}{\mathcal C_j}\right)$ completely in its interior.
\end{itemize}

The collection $\mathcal C_i$ is called the \emph{$i^{\text{th}}$ corona}. The \emph{Heesch number} $H(T)$ of $T$  is the maximal nonnegative integer $n$ such that $T$ can be surrounded $n$ times. If such a maximum does not exist, then we define the Heesch number to be infinite. We have that $H(T)<\infty$ if and only if $T$ does not tessellate $\E^d$: the direction $(\Rightarrow)$ is trivial, while the direction $(\Leftarrow)$ follows from the Extension Theorem \cite[Theorem 3.8.1]{gs} (whose proof for the plane works in any dimension).

We shall also consider packings and tessellations in the integer lattice $\Z^d$. A family of translates of a finite subset of $\Z^d$ is a \emph{packing} if the translates are pairwise disjoint, and a \emph{tessellation} if they form a partition of $\Z^d$. In $\Z^3$, we identify a point $x$ with the unit cube $x+[0,1]^3$; in this way, a finite subset $K$ of $\Z^3$ becomes a polycube (the union of the corresponding cubes), and packings of $\Z^3$ by translates of $K$ become packings of $\E^3$ by integer translates of this polycube.

\section{Four points on a line}\label{s3}

In this section we study a four-point set of integers that does not tessellate $\Z$, but comes very close to tessellating it. For a positive integer $n$, let $a=2n-1$ and $b=2n+1$, and let $F_n=\{0,a,b,a+b\}$; explicitly, $F_n=\{0,2n-1,2n+1,4n\}$. The gaps between consecutive elements of $F_n$ are $2n-1$, $2$ and $2n-1$.

\begin{lemma}\label{l31}
For every positive integer $n$, the set $F_n$ does not tessellate $\Z$.
\end{lemma}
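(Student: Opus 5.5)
I would prove Lemma~\ref{l31} algebraically, using the polynomial criterion for tilings of $\Z$ by a finite set. Suppose, for contradiction, that $F_n\oplus C=\Z$ for some $C\subseteq\Z$. By Newman's theorem, $C$ may be taken periodic, say with period $N$. Translating $C$ so that it lies in $\{0,\dots,N-1\}$ modulo $N$, and writing $A(x)=\sum_{f\in F_n}x^f$, we get
\[
A(x)\,C(x)\equiv 1+x+\dots+x^{N-1}\pmod{x^N-1}.
\]
Here $A(x)=1+x^a+x^b+x^{a+b}=(1+x^a)(1+x^b)$. I will then invoke condition (T1) of Coven and Meyerowitz, which they proved necessary for every tile of $\Z$. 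It says
\[
A(1)=\prod_{s\in S_A}\Phi_s(1),
\]
where $S_A$ is the set of prime powers $s$ such that the cyclotomic polynomial $\Phi_s$ divides $A$.

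The key step is computing $S_A$, which I expect to be routine. Because $a$ is odd, $1+x^a=(x^{2a}-1)/(x^a-1)$ factors as $\prod_{d\mid a}\Phi_{2d}(x)$, and likewise $1+x^b=\prod_{d\mid b}\Phi_{2d}(x)$. So every cyclotomic factor of $A$ is $\Phi_{2d}$ with $d$ odd. Such an index $2d$ is a prime power only when $d=1$. Hence $S_A=\{2\}$ (only $\Phi_2$ occurs, albeit twice, and $S_A$ counts it once). Then $\prod_{s\in S_A}\Phi_s(1)=\Phi_2(1)=2\neq 4=A(1)$, which contradicts (T1).

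The main obstacle is that this relies on an external theorem. If a self-contained argument is preferred, I would instead use an elementary forcing argument. Fix a tile $t+F_n$ and look at the $2n-2$ points $t+1,\dots,t+a-1$ strictly between $t$ and $t+a$. Consider the leftmost uncovered point in such a gap and ask which element of $F_n$ of a new translate can cover it. Only the gap of length $2$ between $a$ and $b$ is narrow enough to interleave, so each covering choice should force a chain of translates that eventually overlap or leave a point uncovered. I would first try the (T1) route, and fall back on this case analysis, organized by induction along the gap, only if a citation-free proof is wanted.
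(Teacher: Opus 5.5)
Your main argument is correct, but it follows a different route from the paper. The factorization $1+x^a+x^b+x^{a+b}=(1+x^a)(1+x^b)=\prod_{d\mid a}\Phi_{2d}(x)\prod_{d\mid b}\Phi_{2d}(x)$ is right. Since $a$ and $b$ are odd, the only prime-power index that occurs is $2$, so $S_A=\{2\}$ and $\prod_{s\in S_A}\Phi_s(1)=2\neq 4=A(1)$. Coven and Meyerowitz prove (T1) necessary for every finite tile of $\Z$, so this finishes the proof.

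Two small remarks on the setup. Your preliminary periodicity step is not needed once you cite their result, which handles it internally. Also, ``translating $C$ so that it lies in $\{0,\dots,N-1\}$'' should read ``taking $B=C\cap\{0,\dots,N-1\}$''.

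The paper instead gives a self-contained argument that works directly with the indicator $\mathds{1}$ of $C$. It starts from $\mathds{1}(t)+\mathds{1}(t-a)+\mathds{1}(t-b)+\mathds{1}(t-a-b)=1$. It uses the oddness of $a$ to replace the ``factor'' $1+x^b$ by $1+x^{ab}$: values alternate along progressions of difference $b$, taken over an odd number of steps. It then uses the oddness of $b$ to replace $1+x^a$ by $1+x^{ab}$ in the same way. The result is $\mathds{1}(t)+2\,\mathds{1}(t-ab)+\mathds{1}(t-2ab)=1$, and the middle coefficient $2$ forces $C=\varnothing$.

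The two proofs detect the same obstruction. Because $a$ and $b$ are both odd, $1+x^a$ and $1+x^b$ have the same $2$-power cyclotomic content, namely just $\Phi_2$. In your language, $\Phi_2$ is counted only once in $S_A$ although the tile has size $2^2$. In the paper's language, the two factors collapse to $(1+x^{ab})^2$, which produces the coefficient $2$.

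Your route is shorter given the literature and explains the phenomenon conceptually. It is essentially the cyclotomic counterpart of Newman's prime-power criterion quoted in the paper's Note. The paper's route needs no citation, not even Newman's periodicity theorem, and fits in a few lines.

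Your fallback, as written, is only a heuristic, not a proof. It never identifies which contradiction the ``forced chain'' reaches, or why. If you want a citation-free argument, a gap-by-gap case analysis is the wrong tool. The paper's alternation argument above is the natural elementary translation of your cyclotomic computation.
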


\begin{proof}
Suppose the contrary, and let $C$ be the set of translation vectors in a tessellation of $\Z$ by translates of $F_n$. Let $\mathds{1}(t)$ be defined by $\mathds{1}(t)=1$ if $t\in C$, and $\mathds{1}(t)=0$ otherwise. Then
\begin{equation}\label{e1}
\mathds{1}(t)+\mathds{1}(t-a)+\mathds{1}(t-b)+\mathds{1}(t-a-b)=1
\end{equation}
for every integer $t$. Put $f(t)=\mathds{1}(t)+\mathds{1}(t-a)$. By \eqref{e1} we have $f(t)+f(t-b)=1$, so the values of $f$ alternate between $0$ and $1$ along every arithmetic progression with difference $b$; as $a$ is odd, this gives $f(t)+f(t-ab)=1$. Now put $g(t)=\mathds{1}(t)+\mathds{1}(t-ab)$. The second-to-last equality reads $g(t)+g(t-a)=1$, so in the same way, as $b$ is odd, we get $g(t)+g(t-ab)=1$, that is,
\[
\mathds{1}(t)+2\cdot\mathds{1}(t-ab)+\mathds{1}(t-2ab)=1
\]
for every integer $t$. Since $\mathds{1}$ takes only the values $0$ and $1$, this gives $\mathds{1}(t-ab)=0$ for every $t$, that is, $C=\varnothing$, a contradiction.
\end{proof}

\begin{note}
Lemma \ref{l31} is also a special case of a theorem of Newman \cite{newman}: a set of $p^\alpha$ integers, where $p$ is a prime, tessellates $\Z$ if and only if, among the largest powers of $p$ dividing the differences of two elements of the set, there are at most $\alpha$ distinct ones. For $F_n$, we have $p^\alpha=4$, while the largest powers of $2$ dividing the differences $2n-1$, $2$ and $4n$ are three distinct numbers.
\end{note}

On the other hand, $F_n$ can pack intervals whose length is of order $n^2$, while its own diameter is only $4n$. This discrepancy is the source of the large Heesch numbers in Section \ref{s5}.

\begin{lemma}\label{l32}
Let $n$ be an integer, $n\geqslant4$. Then there is a finite packing of $\Z$ by translates of $F_n$ that covers $4n^2-10n-2$ consecutive integers.
\end{lemma}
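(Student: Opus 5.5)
The plan is to use the product structure $F_n=\{0,a\}+\{0,b\}$ and look for the packing as a two-dimensional picture. Every translate of $F_n$ by a vector $t=2ia+2jb$ ($i,j$ integers) covers exactly the four integers $ua+vb$ with $(u,v)\in\{2i,2i+1\}\times\{2j,2j+1\}$. So a finite set $R\subset\Z^2$ that is a disjoint union of $2\times2$ blocks $\{2i,2i+1\}\times\{2j,2j+1\}$ gives a family of translates of $F_n$. Its union is the image of $R$ under $\varphi(u,v)=ua+vb$.

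Since $\gcd(a,b)=1$, the fibres of $\varphi$ are the lines $\{(u_0+kb,\,v_0-ka):k\in\Z\}$, that is, cosets of the lattice $\Lambda$ generated by $(b,-a)$. The family is therefore a packing exactly when $R$ contains no two points differing by a nonzero multiple of $(b,-a)$. Moreover, an integer $N$ is covered exactly when its fibre meets $R$. So I will aim for a region $R$, tiled by aligned $2\times2$ blocks, that meets each fibre at most once, and meets every fibre over an interval of $4n^2-10n-2$ consecutive integers.

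First I will check that the naive rectangular choice fails. Take $R=[0,2K)\times[0,2L)$. Injectivity needs $2K\le b$ or $2L\le a$, and since $a,b$ are odd this means $2K\le b-1$ or $2L\le a-1$. In the first case, $N$ is covered exactly when $N\bmod b$ has $u$-representative $u=Na^{-1}\bmod b$ in $[0,2K)$. With $2K\le b-1$ some residue is missed, so the covered interval is short; a parity defect of this kind occurs whichever dimension is cut.

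The remedy is a staircase region, a fundamental domain for $\Lambda$ of area about $ab=4n^2-1$, adapted to the block structure. I take $R$ to be a union of two or three rectangles of even side lengths, stacked so that moving by $(b,-a)$ always exits $R$. For instance, I would take $[0,b+1)\times[0,2L_1)$ together with $[0,2K_2)\times[2L_1,2L_2)$, with the parameters chosen so that the two extra columns $u=b-1,b$ in the lower part are compensated by a missing strip of height about $a$ in the upper part. I will then verify injectivity directly, by checking that the translate $R+(b,-a)$ is disjoint from $R$ (all other multiples move even further away). Next I will determine the set of covered $N$: for each residue of $N$ modulo $b$ the representative $u$ is fixed, and the covered $v$ form an interval, so the set of covered $N$ is an explicit union of arithmetic progressions with difference $b$. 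Its longest run of consecutive integers is then bounded below by a direct count.

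The expected loss is a boundary effect of order $n$: a constant number of strips of width $2$ along the edges of a region whose sides are about $2n$ long. This should yield $ab-(10n-1)=4n^2-10n-2$ (or more). The hypothesis $n\ge4$ should be what makes all the chosen side lengths positive and even.

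The main obstacle is the precise choice of the staircase, so that the parity constraint imposed by the $2\times2$ blocks, injectivity modulo $\Lambda$, and the run of consecutive covered integers are satisfied simultaneously with only $O(n)$ loss. I would first try the two-rectangle staircase above and compute the longest covered run explicitly. If the loss comes out larger than $10n$, I would add a third step, since each extra step should trim the defect near one end of the interval.
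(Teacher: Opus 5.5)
There is a genuine gap, and no choice of staircase can close it. Because your blocks are aligned, every translation vector is $t=2ia+2jb$, which is always \emph{even}. For even $t$, the translate $F_n+t$ meets the odd integers in exactly two consecutive odd numbers, $t+2n-1$ and $t+2n+1$.

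Inside a run of consecutive covered integers, these pairs must cover each odd integer of the run exactly once. So, apart from at most one pair sticking out at each end, the odd integers of the run are split into consecutive pairs. The corresponding translation vectors then form an arithmetic progression $t_0,t_0+4,t_0+8,\dots$ with about $L/4$ terms, where $L$ is the length of the run. Once this progression has more than $n$ terms, it contains both $t$ and $t+4n$. The point $t+4n$ then lies both in $F_n+t$ (as $t+a+b$) and in $F_n+(t+4n)$, contradicting the packing property.

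Counting carefully, any packing by even translates covers at most $4n+5$ consecutive integers, and $4n+5<4n^2-10n-2$ for every $n\geqslant4$. So the loss is of order $n^2$, not a boundary effect of order $n$. The ``parity defect'' you found for rectangles is a symptom of this obstruction. It is caused by the alignment of the blocks, not by the shape of $R$. A side remark: $\Lambda$ has rank one, so it has no fundamental domain of finite area. What you actually want is a set of representatives of the fibres over a long interval.

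A correct construction must use translates of both parities, which in your picture means $2\times2$ squares $\{u,u+1\}\times\{v,v+1\}$ at arbitrary, non-aligned positions. The paper does this directly in $\Z$ by using the factorization $F_n=\{0,a\}+\{0,b\}$ in two stages.
\begin{itemize}
\item It takes $\gamma$ with $a\gamma\equiv1\pmod{2b}$ and lets $V$ be the set of $t$ with $\gamma t\bmod 2b<b$. The translates of $\{0,b\}$ by $V$ tile $\Z$.
\item Along every progression with difference $a$, membership in $V$ follows the pattern of $b$ terms in, then $b$ terms out.
\item The interval $[0,ab-1]$ splits into $a$ such progressions of $b$ terms, each containing at most one switch. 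Hence $V\cap[0,ab-1]$ can be covered exactly once by pairs $\{t,t+a\}$, $t\in U$, allowing left ends in $[-a,-1]$.
\item This gives $\mathds{1}_U(t)+\mathds{1}_U(t-a)+\mathds{1}_U(t-b)+\mathds{1}_U(t-a-b)=1$ on $[b,ab-1]$.
\item Keeping only $t\in U\cap[b,ab-1-a-b]$ yields a packing covering $[a+2b,ab-1-a-b]$, that is, $ab-2a-3b=4n^2-10n-2$ integers.
\end{itemize}
In your coordinates, the preimage of $V$ under $\varphi$ is the brick pattern $\{(u,v):\lfloor u/b\rfloor+v\text{ even}\}$. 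Since $b$ is odd, the resulting $2\times2$ squares are staggered rather than aligned, which is exactly what your framework excludes.
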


\begin{proof}
Since $a$ and $2b$ are coprime, there is an integer $\gamma$ such that $a\gamma\equiv1\pmod{2b}$. Note that $\gamma$ is odd. For an integer $t$, let $v(t)=1$ if the remainder of $\gamma t$ modulo $2b$ is less than $b$, and $v(t)=0$ otherwise. As $\gamma b\equiv b\pmod{2b}$ (recall that $\gamma$ is odd), we have
\begin{equation}\label{e2}
v(t)+v(t-b)=1
\end{equation}
for every integer $t$. Moreover, as $\gamma a\equiv1\pmod{2b}$, along every arithmetic progression with difference $a$ the values of $v$ repeat the pattern of $b$ ones followed by $b$ zeros. The integers from $0$ to $ab-1$ split into $a$ progressions with difference $a$, each having $b$ terms, and along each of them $v$ changes its value at most once. Therefore, we can assign to each integer $t$ with $-a\leqslant t\leqslant ab-1$ a value $u(t)$, equal to $0$ or $1$, so that
\begin{equation}\label{e3}
u(t)+u(t-a)=v(t)\quad\text{whenever $0\leqslant t\leqslant ab-1$}
\end{equation}
(along each progression, the zeros of $v$ force zeros of $u$, while on a block of ones $u$ simply alternates between $0$ and $1$; as $v$ changes its value at most once, there is no conflict). Adding \eqref{e3} for $t$ and for $t-b$, and using \eqref{e2}, we obtain
\begin{equation}\label{e4}
u(t)+u(t-a)+u(t-b)+u(t-a-b)=1\quad\text{whenever $b\leqslant t\leqslant ab-1$}.
\end{equation}
Let $C$ be the set of those integers $t$ for which $u(t)=1$ and $b\leqslant t\leqslant ab-1-a-b$. Each translate $F_n+t$ with $t\in C$ is contained in the interval $[b,ab-1]$, and a point of this interval belongs to $F_n+t$ if and only if $t$ is one of the four numbers appearing on the left-hand side of \eqref{e4}; hence, by \eqref{e4}, these translates form a packing. Moreover, if $a+2b\leqslant t\leqslant ab-1-a-b$, then all the four numbers $t$, $t-a$, $t-b$, $t-a-b$ lie between $b$ and $ab-1-a-b$, and \eqref{e4} shows that $t$ is covered. It remains to note that the number of integers between $a+2b$ and $ab-1-a-b$ (inclusive) is $ab-2a-3b$, which equals $4n^2-10n-2$.
\end{proof}

\section{From four points to a polycube}\label{s4}

Let $e=(1,1,0)$, and for a positive integer $n$, let $P_n=\{te:t\in F_n\}$; thus $P_n$ consists of the points $0$, $(2n-1)e$, $(2n+1)e$ and $4ne$ of $\Z^3$. Every translate of $P_n$ lies on a line parallel to $e$, so a tessellation of $\Z^3$ by translates of $P_n$ would induce a tessellation of each such line (which is a copy of $\Z$) by translates of $F_n$. Hence, by Lemma \ref{l31}, $P_n$ does not tessellate $\Z^3$. Of course, $P_n$ is very far from being connected. In order to connect its four points, we attach to each of them the same bent path of cubes.

Let $S_n$ be the polycube consisting of the $2n$ cubes $(r,0,0)$, where $r=0,1,\dots,2n-1$, and the $2n$ cubes $(2n-1,s,1)$, where $s=0,1,\dots,2n-1$. In other words, $S_n$ starts at the origin, runs through $2n$ cubes along the $x$-axis, then climbs one level up and runs through $2n$ further cubes in the direction of the $y$-axis; we call these two parts the \emph{lower} and the \emph{upper arm} of $S_n$. Let
\[
K_n=\bigcup_{p\in P_n}(S_n+p).
\]
The polycube $K_n$ thus consists of the four translates of $S_n$ by the vectors $0$, $(2n-1)e$, $(2n+1)e$ and $4ne$, which are, respectively, the blue, the brown, the green and the purple path in Figure \ref{fig}, where $K_n$ is shown for $n=16$. In this picture, the lower arms go down to the right and the upper arms go up to the right, the four points of $P_n$ are the first cubes of the four paths, and the paths follow each other in the direction of $e$.

\begin{figure}[htb]
\begin{center}
\includegraphics[width=\textwidth]{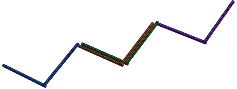}
\end{center}
\caption{The polycube $K_n$ for $n=16$.}\label{fig}
\end{figure}

The properties of $K_n$ that we need are the following ones.

\begin{itemize}
\item[(a)] Let the \emph{lower strip} be the union of the $2n$ lines parallel to $e$ that pass through the cubes of the lower arm of $S_n$, and let the \emph{upper strip} be the union of the $2n$ lines lying directly above them, in the plane $z=1$. Each of the two arms of $S_n$ has exactly one cube on each line of the corresponding strip. Therefore, the lower arms of the translates $S_n+te$, where $t\in\Z$, fill the lower strip without overlapping, and their upper arms fill the upper strip without overlapping; in particular, these translates are pairwise disjoint.
\item[(b)]By (a), the four paths forming $K_n$ are pairwise disjoint; thus, $K_n$ consists of $16n$ cubes.
\item[(c)] $K_n$ is connected. Indeed, as can be seen in Figure \ref{fig}, the last cube of the blue path lies directly above the first cube of the brown path (the endpoint $(2n-1,2n-1,1)$ of $S_n$ lies above the point $(2n-1)e$), the same holds for the green and the purple path, and the upper arm of the brown path passes over the lower arm of the green path, these two paths being translates of each other by $2e$. Note also that $K_n$ is contained in the box $[0,6n]\times[0,6n]\times[0,2]$.
\item[(d)]$K_n$ does not tessellate $\Z^3$. Indeed, suppose that some translates of $K_n$ form a tessellation of $\Z^3$. By (b), each of these translates is a disjoint union of translates of $P_n$. Consequently, these translates of $P_n$ also form a tessellation of $\Z^3$, which we have seen to be impossible.
\end{itemize}

The following lemma shows that $K_n$, just as $F_n$, comes very close to tessellating.

\begin{lemma}
    \label{p41}
Let $n$ be an integer, $n\geqslant4$. Then there is a finite packing by integers translates of $K_n$ that covers a cube of edge length $4n^2-12n-1$.
\end{lemma}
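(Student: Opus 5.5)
The plan is to lift the one-dimensional packing of Lemma \ref{l32} along the direction $e$, and then to stack the resulting pieces in a periodic pattern transversal to $e$. The point is that $K_n=S_n+P_n$ is a Minkowski sum. Hence a translate $K_n+ce+q$ equals $(S_n+q)+(P_n+ce)$, and the union over $c$ of such translates is $S_n+q+\{te:t\in D\}$, where $D=\bigcup_{c\in C}(F_n+c)$.

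First, fix the packing from Lemma \ref{l32}: a finite set $C$ of translation vectors such that the sets $F_n+c$, $c\in C$, are pairwise disjoint, and their union $D$ contains an interval $I=[\alpha,\alpha+L-1]$ with $L=4n^2-10n-2$.

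Next, compute $S_n+\Z e$ explicitly using property (a).
\begin{itemize}
\item The lower arm of $S_n+te$ meets the line $\{x-y=r,\ z=0\}$ (for $0\leqslant r\leqslant 2n-1$) in the single cube $(r+t,t,0)$.
\item The upper arm of $S_n+te$ meets the line $\{x-y=r,\ z=1\}$ (for $0\leqslant r\leqslant 2n-1$) in the single cube $(2n-1+t,\,2n-1-r+t,\,1)$.
\end{itemize}
Thus $S_n+\Z e$ is exactly the slab $\{(x,y,z): z\in\{0,1\},\ 0\leqslant x-y\leqslant 2n-1\}$. Translating this slab by the vectors $q\in Q=\{(2nk,0,2m):k,m\in\Z\}$ gives a partition of $\Z^3$.

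Now consider the family $\{K_n+ce+q : c\in C,\ q\in Q\}$. It is a packing, for the following reason.
\begin{itemize}
\item For a fixed $q$, distinct $c$ give disjoint sets $P_n+ce$. Their union is $De$ with all elements of $D$ distinct, so by (a) the translates $S_n+te+q$, $t\in D$, are pairwise disjoint.
\item Different $q$ give pieces lying in different slabs, which are disjoint.
\end{itemize}

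For coverage, parametrize each line parallel to $e$ by its $y$-coordinate. On a lower line, the covered $y$-values are exactly $D$. On an upper line, they are $D+(2n-1-r)$, a shift of $D$ by an amount between $0$ and $2n-1$. Hence every line in every slab is covered for all $y\in[\alpha+2n-1,\alpha+L-1]$. This interval has $L-(2n-1)=4n^2-12n-1$ integers, which is exactly the required edge length.

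So the whole region $\{(x,y,z): y\in[\alpha+2n-1,\alpha+L-1]\}$ is covered, and it contains a cube of that edge length, with $x$ and $z$ ranging over any intervals of the same length. To make the packing finite, keep only those $q\in Q$ whose slab meets the chosen cube. Since $x-y$ and $z$ are bounded on the cube, only finitely many $(k,m)$ occur.

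The only delicate step is the bookkeeping for the upper arm: one must check that its cube on each line is shifted by at most $2n-1$ relative to the lower-arm parametrization. This shift is precisely what accounts for the loss from $4n^2-10n-2$ to $4n^2-12n-1$.
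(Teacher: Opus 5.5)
Your proof is correct and takes essentially the same route as the paper. You lift the packing of Lemma \ref{l32} along $e$, lose $2n-1$ because the upper arms are offset by up to $2n-1$, and then fill space transversally with the translations $(2n,0,0)$ and $(0,0,2)$. The differences are only bookkeeping: you track coverage on lines parallel to $e$ indexed by $y$ instead of by rows and columns, and you merge the paper's second and third steps into the single lattice $Q$, making finiteness explicit.
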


\begin{proof}
We construct the packing in three steps.

First, we move $K_n$ along $e$ as dictated by Lemma \ref{l32}: for each translate $F_n+t$ in the packing from Lemma \ref{l32}, we take the translate $K_n+te$. Let us cut the lower strip into \emph{rows}, that is, into its intersections with lines parallel to the $x$-axis, and the upper strip into \emph{columns}, that is, into its intersections with lines parallel to the $y$-axis; each row and each column consists of $2n$ cubes. As can be seen in Figure \ref{fig}, $K_n$ consists of the four rows whose $y$-coordinates belong to $F_n$ (these are the lower arms) and the four columns whose $x$-coordinates belong to $F_n+2n-1$ (these are the upper arms), and the translation by $te$ adds $t$ to all these coordinates. Thus, in each of the two strips, our translates of $K_n$ simply reproduce the packing from Lemma \ref{l32}, with rows, respectively columns, in place of integers. In particular, they do not overlap, and they cover all the rows whose $y$-coordinates lie in an interval of $4n^2-10n-2$ consecutive integers, as well as all the columns whose $x$-coordinates lie in this interval shifted by $2n-1$. Since $x-y$ takes only the values $0,1,\dots,2n-1$ on the strips, all the cubes of both strips whose $y$-coordinates lie in the interval without its first $2n-1$ elements are covered; this remaining interval consists of $4n^2-12n-1$ integers. In this way, we get two long blocks of cubes, lying one above the other, in the planes $z=0$ and $z=1$.

Second, we translate the packing obtained in the first step by the vectors $(2nj,0,0)$, where $j$ runs through an interval of integers. Such a translation does not change the $y$-coordinates, and it moves each of the two strips sideways by $j$ times its width (in Figure \ref{fig}, the translation by $(2n,0,0)$ moves each lower arm along itself by its own length). As the packing obtained in the first step lies in the two strips, its translates do not overlap, and the translates of our two blocks lie side by side, without gaps. In each of the planes $z=0$ and $z=1$, they form a parallelogram of height $4n^2-12n-1$ (in the direction of the $y$-axis), with two sides parallel to $e$ and two sides parallel to the $x$-axis, the latter as long as we wish; if they are long enough, the parallelogram contains a square of side $4n^2-12n-1$.

Finally, we translate the packing obtained in the second step by the vectors $(0,0,2k)$, where $k$ runs through an interval of integers. These translates lie in pairwise disjoint pairs of consecutive horizontal planes, so they do not overlap, and they cover, in each of as many consecutive horizontal planes as we wish, the square from the second step. In particular, they cover a cube of edge length $4n^2-12n-1$, which was to be proved.
\end{proof}

\section{Bumps, nicks and coronas}\label{s5}

So far, only translations have been considered. However, rotations and reflections can be easily excluded by suitable geometric decorations (these arguments are quite standard in the area). Let $Q$ be the unit cube $[0,1]^3$, modified as follows: to each face we attach a small bump, and into the opposite face, at the same place, we carve the matching nick; the bumps on different faces are chosen to be different. The bumps and nicks are chosen in such a way that a bump fits only into its own nick, and only in one way. From now on, let $T_n$ be a shape obtained by replacing each constituent unit cube of $K_n$ by $Q$. Note that any two unit cubes that share a face now have their corresponding faces decorated by a matching bump and nick; this means that, basically, $T_n$ is obtained from the polycube $K_n$ by putting the bumps and nicks on those faces of its cubes that lie on its boundary. As bumps and nicks are chosen in such a way that each copy of $T_n$ in $\E^3$ forces all of its neighboring copies to be its translates by integer vectors, and as $K_n$ does not tessellate $\mathbb{E}^3$ by integer translations, we conclude that $T_n$ does not tessellate $\E^3$, and hence $H(T_n)<\infty$.

It remains to show that $H(T_n)$ is large. For this, we use the following simple observation: if a finite packing by copies of a shape $T$, where the projection of each copy onto any coordinate axis is of length at most $D$, covers a cube whose edges are parallel to the coordinate axes and longer than $2hD$,  then $H(T)\geqslant h$. (Indeed, starting from a member of the packing that contains the center of the cube, let each corona consist of the members not used so far that share a point with some member of the previous corona; as each corona reaches at most $D$ further from the center than the previous one along each coordinate direction, the starting member and the first $h-1$ coronas lie in the interior of the cube, so the total number of coronas is at least $h$.)

We are now ready to prove the main result.

\begin{theorem}\label{main}
For every positive integer $h$, there exists a shape $T$ in $\E^3$ such that $h\leqslant H(T)<\infty$.
\end{theorem}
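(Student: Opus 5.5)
The plan is to take $T=T_n$ for a suitable $n$ depending on $h$, and to combine three earlier facts. First, $H(T_n)<\infty$ for every $n$, as already shown at the start of Section \ref{s5}. Second, Lemma \ref{p41} gives a finite packing by integer translates of $K_n$ covering a large cube. Third, the observation at the start of this section turns a packing of a large cube into $h$ coronas. So only the lower bound $H(T_n)\geqslant h$ needs to be arranged.

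Fix $h$, and let $n\geqslant4$ be chosen later. By Lemma \ref{p41}, some finite packing by integer translates of $K_n$ covers a cube $Z$ of edge length $4n^2-12n-1$, with axis-parallel edges. Replacing each unit cube by the decorated cube $Q$ turns every translate $K_n+v$ into $T_n+v$. Because all translates are by integer vectors, any two unit cubes of the packing that share a face get a matching bump and nick there. Hence the translates $T_n+v$ still have pairwise disjoint interiors and form a packing by copies of $T_n$.

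This packing covers $Z$ up to the decorations. It is cleanest to shrink $Z$ slightly, for instance to the cube with the same centre and edge $4n^2-12n-3$. Bumps protruding outward at the boundary are harmless, and nicks occur only at outer faces of the packing. Since all cubes inside $Z$ are covered, every nick at a face interior to $Z$ is filled by the bump of the neighbouring cube, so the shrunken cube is genuinely covered. I expect this bookkeeping about decorations near the boundary to be the only delicate point. It can be handled by taking bumps and nicks of size less than $1/2$ and discarding one layer.

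Next comes the diameter bound. By property (c), $K_n\subseteq[0,6n]\times[0,6n]\times[0,2]$. Each projection of $T_n$ onto a coordinate axis therefore has length at most $D=6n+1$, allowing for bumps. By the observation, $H(T_n)\geqslant h$ as soon as
\[
4n^2-12n-3>2h(6n+1).
\]
The left side is quadratic in $n$ and the right side is linear, so this holds for all sufficiently large $n$. For example, $n=3h+4$ gives $4n^2-12n-3=12hn+4n-3$, which exceeds $12hn+2h$ because $4n-3>2h$. With this choice of $n$, $T=T_n$ satisfies $h\leqslant H(T)<\infty$.
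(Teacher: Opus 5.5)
Your proposal is correct and follows essentially the same route as the paper. You take $n=3h+4$, decorate the packing from Lemma~\ref{p41} into a packing by integer translates of $T_n$, bound each projection by $6n+1$ using property (c), and apply the corona observation. Your step of shrinking the cube to edge $4n^2-12n-3$ to absorb unfilled boundary nicks is just a slightly more conservative version of the paper's remark that the covered cube has edge more than $4n^2-12n-2$.
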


\begin{proof}
Let $n=3h+4$ and $T=T_n$. We have already seen that  $H(T)<\infty$. The packing from Lemma \ref{p41} (with copies of $T$ instead of copies of $K_n$) covers a cube with edge length more than $4n^2-12n-2$ (recall that bumps and nicks are small). Moreover, by (c), the projection of each member of the new packing on any coordinate axis is of length at most $6n+1$. We have $4n^2-12n-2=36h^2+60h+14$ and $2h(6n+1)=36h^2+50h$, so the edge length of the covered cube is greater than $2h(6n+1)$. Therefore, $H(T)\geqslant h$, which was to be proved.
\end{proof}

\section*{Acknowledgments}

The author was supported by the Ministry of Science, Technological Development and Innovation of the Republic of Serbia (grants no. 451-03-33/2026-03/200125 and 451-03-34/2026-03/200125).


\begin{thebibliography}{xx}
\bibitem{ammann}R. Ammann, in: Special session on tilings, 868th meeting of the American Mathematical Society, Philadelphia, Pennsylvania, October 12--13, 1991.
\bibitem{basic15}B. Ba\v si\'c, The Heesch number for multiple prototiles is unbounded, \emph{C. R. Math. Acad. Sci. Paris} \textbf{353} (2015), 665--669.
\bibitem{basic21}B. Ba\v si\'c, A figure with Heesch number $6$: pushing a two-decade-old boundary, \emph{Math. Intelligencer} \textbf{43} (2021), 50--53.
\bibitem{bds}B. Ba\v si\'c \& A. D\v zuklevski \& A. Slivkov\'a, Solutions to seven and a half problems on tilings, \emph{Electron. J. Combin.} \textbf{30} (2023), Article No. 2.50, 35 pp.
\bibitem{bs22}B. Ba\v si\'c \& A. Slivkov\'a, Asymptotical unboundedness of the Heesch number in $\mathbb E^d$ for $d\to\infty$, \emph{Discrete Comput. Geom.} \textbf{67} (2022), 328--337.
\bibitem{bs25}B. Ba\v si\'c \& A. Slivkov\'a, The existence of hypersolids in $\mathbb E^d$ whose Heesch number is $d-1$, \emph{Contrib. Discrete Math.} \textbf{20(1)} (2025), 105--116.
\bibitem{berger}R. Berger, The undecidability of the domino problem, \emph{Mem. Amer. Math. Soc.} \textbf{66} (1966), 72 pp.
\bibitem{danzer}L. Danzer, A family of 3D-spacefillers not permitting any periodic or quasiperiodic tiling, in: G. Chapuis \& W. Paciorek (eds.), \emph{Aperiodic '94}, World Scientific, Singapore, 1995, pp. 11--17.
\bibitem{dl}E. D. Demaine \& S. Langerman, Tiling 3D by translates of a single polycube is undecidable, preprint, arXiv:2610.12392.
\bibitem{fontaine}A. Fontaine, An infinite number of plane figures with Heesch number two, \emph{J. Combin. Theory Ser. A} \textbf{57} (1991), 151--156.
\bibitem{gt}R. Greenfeld \& T. Tao, Undecidability of translational monotilings, \emph{J. Eur. Math. Soc.}, published online 2025, doi:10.4171/JEMS/1673.
\bibitem{gs}B. Gr\"unbaum \& G. C. Shephard, \emph{Tilings and Patterns}, W. H. Freeman, New York, 1987.
\bibitem{heesch35}H. Heesch, Aufbau der Ebene aus kongruenten Bereichen, \emph{Nachr. Ges. Wiss. G\"ottingen, Math.-Phys. Kl., Fachgr. I (N.F.)} \textbf{1} (1935), 115--117.
\bibitem{heesch68}H. Heesch, \emph{Regul\"ares Parkettierungsproblem}, Westdeutscher Verlag, K\"oln--Opladen, 1968.
\bibitem{hilbert}D. Hilbert, Mathematical problems, \emph{Bull. Amer. Math. Soc.} \textbf{8} (1902), 437--479.
\bibitem{kaplan}C. S. Kaplan, Heesch numbers of unmarked polyforms, \emph{Contrib. Discrete Math.} \textbf{17} (2022), 150--171.
\bibitem{lietzmann}W. Lietzmann, \emph{Lustiges und Merkw\"urdiges von Zahlen und Formen,} Hirt, Breslau, 1928.
\bibitem{maiti}A. Maiti, Unboundedness of the Heesch number for hyperbolic convex monotiles, preprint, arXiv:2603.27827.
\bibitem{mann04}C. Mann, Heesch's tiling problem, \emph{Amer. Math. Monthly} \textbf{111} (2004), 509--517.
\bibitem{mann-thomas}C. Mann \& B. C. Thomas, Heesch numbers of edge-marked polyforms, \emph{Experiment. Math.} \textbf{25} (2016), 281--294.
\bibitem{newman}D. J. Newman, Tesselation of integers, \emph{J. Number Theory} \textbf{9} (1977), 107--111.
\bibitem{reinhardt}K. Reinhardt, Zur Zerlegung der euklidischen R\"aume in kongruente Polytope, \emph{Sitzungsber. Preuss. Akad. Wiss. Berlin} (1928), 150--155.
\bibitem{robinson}R. M. Robinson, Undecidability and nonperiodicity for tilings of the plane, \emph{Invent. Math.} \textbf{12} (1971), 177--209.
\bibitem{smkg1}D. Smith \& J. S. Myers \& C. S. Kaplan \& C. Goodman-Strauss, An aperiodic monotile, \emph{Combin. Theory} \textbf{4(1)} (2024), Article No. 6, 91 pp.
\bibitem{smkg2}D. Smith \& J. S. Myers \& C. S. Kaplan \& C. Goodman-Strauss, A chiral aperiodic monotile, \emph{Combin. Theory} \textbf{4(2)} (2024), Article No. 13, 25 pp.
\bibitem{tarasov}A. S. Tarasov, On the Heesch number for the hyperbolic plane, \emph{Math. Notes} \textbf{88} (2010), 97--102.
\bibitem{wang}H. Wang, Proving theorems by pattern recognition II, \emph{Bell System Tech. J.} \textbf{40} (1961), 1--41.
\end{thebibliography}
\end{document}